\theoremstyle{plain}
\newtheorem{theorem}{Theorem}[section]
\newtheorem{corollary}[theorem]{Corollary}
\newtheorem{lemma}[theorem]{Lemma}
\newtheorem{proposition}[theorem]{Proposition}
\let\oldproofname=\proofname
\renewcommand{\proofname}{\rm\bf{\textit{\oldproofname}}}
\theoremstyle{definition}
\theoremstyle{remark}
\newtheorem*{note*}{Note}
\newcommand{\open}{``} 
\begin{document}

\title{The Braess' Paradox for Pendant Twins}

\author{Lorenzo Ciardo\thanks{Department of Mathematics, University of Oslo, Norway (lorenzci@math.uio.no)}}

\maketitle

\begin{abstract}
The Kemeny's constant $\kappa(G)$ of a connected undirected graph $G$ can be interpreted as the expected transit time between two randomly chosen vertices for the Markov chain associated with $G$. In certain cases, inserting a new edge into $G$ has the counter-intuitive effect of increasing the value of $\kappa(G)$. In the current work we identify a large class of graphs exhibiting this \open paradoxical" behavior -- namely, those graphs having a pair of twin pendant vertices. We also investigate the occurrence of this phenomenon in random graphs, showing that almost all connected planar graphs are paradoxical. To establish these results, we make use of a connection between the Kemeny's constant and the resistance distance of graphs.

\end{abstract}

\noindent{\bf Keywords.} Kemeny's constant;  random walk on a graph; resistance distance; effective resistance; random graph; planar graph

\medskip

\noindent{\bf AMS subject classifications.} 05C81, 05C50, 60J10, 05C12, 05C80, 94C15

\medskip\medskip

\section{Introduction}
\label{sec_introduction}
A useful technique to get insight into the combinatorial structure of a graph is to imagine a random walker moving along its edges. The long-term and short-term behaviors of the discrete stochastic process associated with the walker are strictly linked to both local and global properties of the graph.

The so-called \textit{Kemeny's constant} $\kappa(G)$ of a connected graph $G$ (\cite{KemenySnell}) comes from this technique. Suppose that $G$ has $n$ vertices labeled $1,2,\dots,n$, and let $A$ be its adjacency matrix and $D=diag(Ae)$ be its diagonal degree matrix (where $e$ is the all ones vector in $\mathbb{R}^n$). We can consider a natural random walk defined as follows: the set of states is the vertex set $V(G)$; in any time step the walker moves from a vertex $v$ (the current position) to one of the neighbors of $v$, with all the neighbors being equally likely. The transition matrix of this random walk is then $T=D^{-1}A$. Letting $\sigma(T)=\{1,\lambda_2,\lambda_3,\dots,\lambda_n\}$ be the spectrum of $T$, the Kemeny's constant of $G$ is defined by
\begin{equation}
\label{kemeny_def_eigenvalues_1756_5sept}
\kappa(G)=\sum_{i=2}^n\frac{1}{1-\lambda_i}.
\end{equation}
This parameter provides information about how difficult it is for the random walker to travel in the graph, and how connected the graph is. More precisely, let the \textit{stationary distribution vector} $w$ be the unique entrywise positive vector in $\mathbb{R}^n$ satisfying $w^Te=1$ and $w^T T = w^T$ (see the paragraph about notation below). Then $\kappa(G)$ is the expected number of steps that the walker starting at vertex $i$ needs to reach vertex $j$ for the first time, when both $i$ and $j$ are randomly chosen according to the distribution $w$ (\cite{KemenySnell}).

The Kemeny's constant has met the interest of research communities in a variety of different settings, ranging from network architectures (\cite{Ghayoori-Leon-Garcia}) to consensus protocols (\cite{Jadbabaie-Olshevsky-first,Jadbabaie-Olshevsky-second}) and macroeconomics (\cite{Moosavi-Isacchini}).

A combinatorial expression for $\kappa(G)$ was found in \cite{KirklandZeng}. Let $m$ be the number of edges in $G$, $\tau$ be the number of its spanning trees, $d\in \mathbb{R}^n$ be its degree vector and $S=[s_{ij}]$ be its so-called \textit{$2$-forest matrix}, i.e., the symmetric $n\times n$ matrix whose $ij$-th entry $s_{ij}$ is the number of spanning forests consisting of two trees $T_1$ and $T_2$, such that $i\in V(T_1)$ and $j\in V(T_2)$ (we shall refer to such a forest as to an $\{i,j\}$-$2$-forest). Then
\begin{equation}
\label{kemeny_kirkland_formula_2004_3sept2019}
\kappa(G)=\frac{d^TS d}{4m\tau}.
\end{equation}

This combinatorial formula is a powerful tool to study a phenomenon known as the \textit{Braess' paradox for graphs}. Since the Kemeny's constant measures the expected length of a trip between two randomly chosen vertices of $G$, one could expect that inserting a new edge $e$ into $G$ would generate shortcuts, increase the connectivity and hence decrease $\kappa(G)$. Indeed, this is the observed behavior in the majority of cases. However -- quoting \cite{HuKirkland} -- \open new connections sometimes yield surprising results": for some special graphs we find that $\kappa(G \cup e)>\kappa(G)$. One example of these \open paradoxical" graphs is given by trees with at least $4$ vertices having a pair of \textit{twin pendant vertices} $a$ and $b$ (i.e., $a$ and $b$ are pendant vertices and they are both adjacent to a common vertex). In this case, adding the edge $ab$ results in an increase of the Kemeny's constant (\cite{KirklandZeng}; see also Theorem \ref{twin_braess_property_trees_31july_1651} below). 

The goal of the current work is to extend this result to a larger class of graphs, thus identifying new instances of the Braess' paradox. To this end, we will exploit an intriguing connection of the Kemeny's constant with the so-called \textit{resistance distance} (also known as \textit{effective resistance}) of graphs, which simulates the behavior of electrical resistance in electric circuits (\cite{Kirchhoff,Klein,KleinRandic,Koolen-Markowsky-Park}).

The rest of the paper is organized as follows. In Section \ref{sec_a_characterization_for_v_twin_braess_graphs} we analyze how the operation of adding an edge between two pendant twin vertices affects the value of the Kemeny's constant, and we give a criterion for a graph to be $v$-twin-Braess. In Section \ref{sec_resistance_distance} we use this criterion and certain properties related to the resistance distance of graphs to complete the proof of the first main result of the paper (Theorem \ref{theorem_main_connected_graphs_twin_braess_27_agosto_1340}). In Section \ref{sec_how_many_graphs_are_paradoxical} we focus on the probability that the Braess' paradox occurs in a random graph. In particular, we show that a large random connected planar graph is almost always paradoxical.\\ 

\underline{Notation}. We use the word \textit{graph} to denote a simple, undirected, unweighted graph. The vertex set of a graph $G$ is denoted by $V(G)$. We say that $G$ is \textit{nontrivial} if $|V(G)|>1$. A \textit{non-edge} of $G$ is an edge of the complement of $G$. $\mathbb{R}^n$ is the space of $n$-dimensional real column vectors, and we identify such vectors with the corresponding $n$-tuples. The $i$'th component of a vector $x$ is denoted by $x_i$. The transpose of $x$ is denoted by $x^T$. $M_n(\mathbb{R})$ denotes the space of real square matrices of order $n$.
\section{A characterization for $v$-twin-Braess graphs}
\label{sec_a_characterization_for_v_twin_braess_graphs}
Let $G$ be a connected graph on $n$ vertices and let $v\in V(G)$. We denote by $\tilde{G}_v$ the graph obtained from $G$ by attaching two pendant vertices at $v$. Also, we denote by $\hat{G}_v$ the graph obtained from $\tilde{G}_v$ by connecting the two additional vertices with an edge. We say that $G$ is \textit{$v$-twin-Braess} if $\kappa(\hat{G}_v)>\kappa(\tilde{G}_v)$ (Figure \ref{fig_example_v_twin_braess_1810_5sept}). If $G$ is $v$-twin-Braess for every $v\in V(G)$, then we say that $G$ is \textit{twin-Braess}. 
\begin{figure}
\centering
\begin{tikzpicture}[scale=.7]
%
%
\draw[fill]  (0,0) circle (0.1cm);
\draw[fill]  (1,1.73205) circle (0.1cm);
\draw[fill]  (1,-1.73205) circle (0.1cm);
\draw[fill]  (2,0) circle (0.1cm);
\draw  (2+.1,0+.4) node{$v$};
\draw  (1,2.5) node{$G$};
%
\draw[black] (0,0) -- (1,1.73205) -- (2,0) -- (1,-1.73205) -- (0,0);
\draw[black] (0,0) -- (2,0);
\begin{scope}[shift={(5,0)}]
%
%
\draw[fill]  (0,0) circle (0.1cm);
\draw[fill]  (1,1.73205) circle (0.1cm);
\draw[fill]  (1,-1.73205) circle (0.1cm);
\draw[fill]  (2,0) circle (0.1cm);
\draw[fill]  (2+1.73205,1) circle (0.1cm);
\draw[fill]  (2+1.73205,-1) circle (0.1cm);
\draw  (2+.1,0+.4) node{$v$};
\draw  (2,2.5) node{$\tilde G_v$};
%
\draw[black] (0,0) -- (1,1.73205) -- (2,0) -- (1,-1.73205) -- (0,0);
\draw[black] (0,0) -- (2,0);
\draw[black] (2,0) -- (2+1.73205,1);
\draw[black] (2,0) -- (2+1.73205,-1);
\end{scope}
\begin{scope}[shift={(11.73205,0)}]
%
%
\draw[fill]  (0,0) circle (0.1cm);
\draw[fill]  (1,1.73205) circle (0.1cm);
\draw[fill]  (1,-1.73205) circle (0.1cm);
\draw[fill]  (2,0) circle (0.1cm);
\draw[fill]  (2+1.73205,1) circle (0.1cm);
\draw[fill]  (2+1.73205,-1) circle (0.1cm);
\draw  (2+.1,0+.4) node{$v$};
\draw  (2,2.5) node{$\hat G_v$};
%
\draw[black] (0,0) -- (1,1.73205) -- (2,0) -- (1,-1.73205) -- (0,0);
\draw[black] (0,0) -- (2,0);
\draw[black] (2,0) -- (2+1.73205,1);
\draw[black] (2,0) -- (2+1.73205,-1);
\draw[black] (2+1.73205,+1) -- (2+1.73205,-1);
\end{scope}
\end{tikzpicture}
\caption{An example of the graphs $G$, $\tilde G_v$ and $\hat G_v$. Using either \eqref{kemeny_def_eigenvalues_1756_5sept} or \eqref{kemeny_kirkland_formula_2004_3sept2019} one finds that $\kappa(\tilde G_v)=4.6786$ and $\kappa(\hat G_v)=5.1354$, meaning that $G$ is $v$-twin-Braess.}
\label{fig_example_v_twin_braess_1810_5sept}
\end{figure}
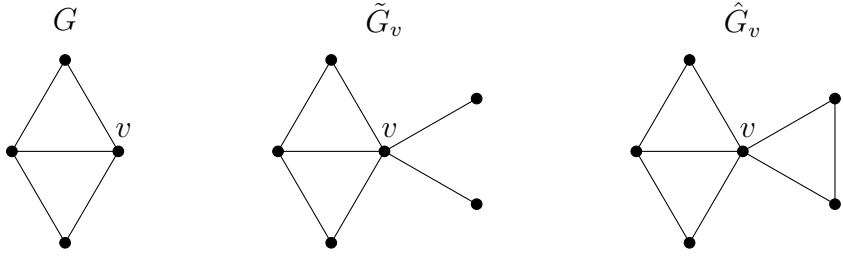

From \cite{KirklandZeng} we have the following result:
\begin{theorem}[\cite{KirklandZeng}]
\label{twin_braess_property_trees_31july_1651}
Every nontrivial tree is twin-Braess.
\end{theorem}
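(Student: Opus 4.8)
The plan is to evaluate $\kappa(\tilde T_v)$ and $\kappa(\hat T_v)$ directly from \eqref{kemeny_kirkland_formula_2004_3sept2019}, which on (near-)trees becomes very transparent once one passes to resistance distances. Recall the classical identity $s_{ij}=\tau\,r_{ij}$ relating the $2$-forest matrix to the effective resistance $r_{ij}$; it turns \eqref{kemeny_kirkland_formula_2004_3sept2019} into
\[
\kappa(G)=\frac{1}{4m}\,d^{T}Rd,\qquad R=[r_{ij}].
\]
Fix a nontrivial tree $T$ on $n$ vertices, a vertex $v\in V(T)$, and let $a,b$ be the pendant vertices attached to $v$; so $\tilde T_v$ is a tree with $n+1$ edges and $\hat T_v$ is unicyclic with $n+2$ edges. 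In passing from $T$ to either graph, the degree sequence changes only at $v$ (which gains $2$) and at $a,b$ (degree $1$ in $\tilde T_v$, degree $2$ in $\hat T_v$); for $x,y\in V(T)$ the resistance $r_{xy}=d_T(x,y)$ is unchanged, since we merely attach a pendant gadget at the single vertex $v$; and for $x\in V(T)$ one has $r_{xa}=r_{xb}=d_T(x,v)+c$, while $r_{ab}$ equals $2$ in $\tilde T_v$ and $c$ in $\hat T_v$, where $c=1$ for $\tilde T_v$ and $c=\tfrac23$ for $\hat T_v$ --- the value $\tfrac23$ being the effective resistance inside the triangle $vab$ (a unit edge in parallel with a two-edge path).

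Substituting these data into $\kappa=\frac{1}{4m}d^{T}Rd$ and expanding the quadratic forms, I would organize the bookkeeping around the observation that the extra degree $2$ at $v$ always multiplies a distance $d_T(v,\cdot)$, so everything is expressed through the two tree invariants
\[
W:=\sum_{x,y\in V(T)}d_xd_y\,d_T(x,y),\qquad P:=\sum_{x\in V(T)}d_x\,d_T(x,v).
\]
A short calculation then gives $\kappa(\tilde T_v)=\frac{W+8P+8n+4}{4(n+1)}$ and $\kappa(\hat T_v)=\frac{3W+36P+32n+16}{12(n+2)}$; clearing denominators, the target inequality $\kappa(\hat T_v)>\kappa(\tilde T_v)$ is equivalent to $3W<12(n-1)P+8n^{2}-12n-8$.

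The heart of the argument is then a clean identity relating $W$ and $P$ on a tree. Expressing $d_T(x,y)$ as the number of edges $e\in E(T)$ whose removal separates $x$ from $y$ and interchanging summations, and writing $c_e$ for the sum of the degrees (in $T$) of the vertices in the component of $T-e$ not containing $v$, one obtains $P=\sum_{e}c_e$ and $W=2\sum_{e}c_e\bigl(2(n-1)-c_e\bigr)$, since the two components of $T-e$ have degree sums $c_e$ and $2(n-1)-c_e$. Hence $W=4(n-1)P-2\sum_{e}c_e^{2}$, so $3W=12(n-1)P-6\sum_{e}c_e^{2}<12(n-1)P$, the inequality being strict because $T$ has at least one edge and every $c_e\ge1$. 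Since $8n^{2}-12n-8\ge0$ for all $n\ge2$ (its roots are $2$ and $-\tfrac12$), we conclude $3W<12(n-1)P+8n^{2}-12n-8$, i.e.\ $\kappa(\hat T_v)>\kappa(\tilde T_v)$; as $v$ was arbitrary, $T$ is twin-Braess. In fact one even gets $\kappa(\hat T_v)-\kappa(\tilde T_v)=\frac{6\sum_{e}c_e^{2}+8n^{2}-12n-8}{12(n+1)(n+2)}$, so the paradox holds with room to spare.

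The only genuinely delicate part is the expansion of $d^{T}Rd$ for $\tilde T_v$ and $\hat T_v$: one must correctly account for the degree bump at $v$, for the four symmetric blocks coming from the pairs $\{x,a\}$ and $\{x,b\}$ with $x\in V(T)$, and for the resistance $\tfrac23$ living inside the triangle. Once that is set up, the identity $W=4(n-1)P-2\sum_{e}c_e^{2}$ makes everything collapse, and the entire content of the statement reduces to the (trivial) nonnegativity of $8n^{2}-12n-8$ for $n\ge2$ together with the strictness coming from the edges of $T$ --- no genuine estimate is required.
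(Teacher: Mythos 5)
Your proof is correct --- I checked the two quadratic-form expansions, the reduction of $\kappa(\hat T_v)>\kappa(\tilde T_v)$ to $3W<12(n-1)P+8n^2-12n-8$, and the edge-decomposition identity $W=4(n-1)P-2\sum_e c_e^2$, and they all hold (e.g.\ for $n=2$ your formulas give $\kappa(\tilde T_v)=5/2$ and $\kappa(\hat T_v)=61/24$, which match a direct computation for $K_{1,3}$ and the paw graph). The route, however, is genuinely different from the paper's. The paper does not prove the tree case separately: it establishes the general Theorem \ref{theorem_main_connected_graphs_twin_braess_27_agosto_1340} by first deriving the sign criterion $\lambda_v(G)>0$ of Proposition \ref{prop_criterion_v_twin_braess_2aug_1621} through a $2$-forest bookkeeping, and then proving $4md^TSe_v-d^TSd\geq 0$ abstractly as $2\tau(2me_v-d)^TL^\dagger(2me_v-d)\geq 0$, using positive semidefiniteness of the Moore--Penrose inverse of the Laplacian. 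Your argument is the tree-specific, fully combinatorial instantiation of that inequality: since resistance equals graph distance on a tree, the quantity $4(n-1)P-W$ becomes exactly $2\sum_e c_e^2$, so the abstract PSD step is replaced by the manifest positivity of a sum of squares of edge weights, and you even obtain a closed form for the gap, $\kappa(\hat T_v)-\kappa(\tilde T_v)=\bigl(6\sum_e c_e^2+8n^2-12n-8\bigr)/\bigl(12(n+1)(n+2)\bigr)$. What your approach buys is elementarity and an explicit quantitative bound (the strictness at $n=2$, where $8n^2-12n-8=0$, comes precisely from $\sum_e c_e^2\geq 1$, mirroring the paper's separate treatment of the $m=1$ base case); what it gives up is generality, since the edge-cut decomposition of distances is special to trees and does not extend to arbitrary connected graphs the way the $L^\dagger$ argument does.
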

The first main result of this paper is an extension of Theorem \ref{twin_braess_property_trees_31july_1651} to general connected graphs.
\begin{theorem}
\label{theorem_main_connected_graphs_twin_braess_27_agosto_1340}
Every nontrivial connected graph is twin-Braess.
\end{theorem}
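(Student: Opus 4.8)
The plan is to reduce the statement to a single inequality about effective resistances and then settle that inequality using the fact that the resistance distance is a metric. First I would record the bridge between Kemeny's constant and resistance distance. Since the $ij$-entry $s_{ij}$ of the $2$-forest matrix is exactly the number of spanning $2$-forests separating $i$ and $j$, the classical matrix-tree-type identity $r_{ij}=s_{ij}/\tau$ holds; substituting this into \eqref{kemeny_kirkland_formula_2004_3sept2019} yields, for every connected graph $H$ with edge set $E(H)$, degrees $d_i$ and resistance distances $r_{ij}$,
\[
\kappa(H)=\frac{1}{2|E(H)|}\sum_{i<j}d_i d_j\, r_{ij}.
\]

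Next I would apply this to $\tilde G_v$ and $\hat G_v$. Both the pendant pair at $v$ and the triangle on $\{v,a,b\}$ are attached to the rest of the graph only at $v$, so the resistance distances among the vertices of $G$ are unchanged, and the new resistances follow from the series and parallel laws: in $\tilde G_v$ one has $r_{ai}=r_{bi}=1+r_{vi}$ and $r_{ab}=2$, while in $\hat G_v$ one has $r_{ai}=r_{bi}=\tfrac23+r_{vi}$ and $r_{ab}=\tfrac23$. Writing $m=|E(G)|$ and $R_v=\sum_{i\in V(G)}d_i r_{vi}$, and keeping in mind that the degree of $v$ becomes $d_v+2$ in both $\tilde G_v$ and $\hat G_v$ while all other degrees are unchanged, a direct computation with the displayed formula gives
\[
\kappa(\tilde G_v)=\frac{2m\kappa(G)+4R_v+4m+6}{2(m+2)},\qquad
\kappa(\hat G_v)=\frac{6m\kappa(G)+18R_v+16m+24}{6(m+3)} .
\]
Subtracting and clearing the (positive) denominators, the condition $\kappa(\hat G_v)>\kappa(\tilde G_v)$ simplifies to the clean criterion
\[
R_v \;>\; \kappa(G)-\frac{(2m+3)(m-1)}{3m},
\]
which would be the characterization of $v$-twin-Braess graphs established in Section~\ref{sec_a_characterization_for_v_twin_braess_graphs}.

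The heart of the argument is to verify this criterion at every $v$, and I would do so by proving the stronger, purely $v$-local inequality $R_v>\kappa(G)$. Here the resistance distance enters essentially, through the triangle inequality $r_{ij}\le r_{iv}+r_{jv}$ valid for all $i,j$. Summing it against the nonnegative weights $d_id_j$ over all pairs and using $\sum_{j\ne i}d_j=2m-d_i$ gives
\[
2m\kappa(G)=\sum_{i<j}d_id_j r_{ij}\;\le\;\sum_{i<j}d_id_j\bigl(r_{iv}+r_{jv}\bigr)=2mR_v-\sum_{i}d_i^2 r_{iv},
\]
so $R_v\ge\kappa(G)+\frac{1}{2m}\sum_i d_i^2 r_{iv}>\kappa(G)$, the strict inequality holding because any nontrivial connected graph has a vertex $i\ne v$ with $d_i\ge1$ and $r_{iv}>0$. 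Since $(2m+3)(m-1)/(3m)\ge0$ for every $m\ge1$, the criterion is met at each $v$, and $G$ is therefore twin-Braess.

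I expect the main difficulty to be bookkeeping rather than conceptual depth: the longest part is carrying out the two resistance computations for the gadgets and the algebraic simplification down to the stated criterion, where one must be careful about the change in $\deg(v)$. Once the criterion is in hand, the decisive step — observing that the degree-weighted triangle inequality for effective resistance already clears the threshold — is short, with the only genuine subtlety being the strictness needed to cover the extreme case $G=K_2$.
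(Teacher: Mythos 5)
Your proposal is correct, and I verified the computations: the two gadget formulas for $\kappa(\tilde G_v)$ and $\kappa(\hat G_v)$, the resulting criterion $R_v>\kappa(G)-\tfrac{(2m+3)(m-1)}{3m}$, and its equivalence (after dividing by $2\tau$ and using $S=\tau\Omega$) with the paper's condition $\lambda_v(G)>0$ all check out. The overall skeleton matches the paper — reduce to a one-parameter criterion at $v$, then verify it via resistance distance — but you reach both halves by a genuinely different route. For the criterion, the paper counts $2$-forests and spanning trees of $\tilde G_v$ and $\hat G_v$ directly, whereas you translate everything into effective resistances and use the cut-vertex/series-parallel laws; your version is shorter and makes the quantity $R_v=d^T\Omega e_v$ conceptually transparent, at the cost of importing the metric interpretation earlier. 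For the decisive inequality, the paper proves the non-strict bound $R_v\ge\kappa(G)$ (equivalently $4md^TSe_v-d^TSd\ge0$) from positive semidefiniteness of $L^\dagger$ via $(2me_v-d)^TL^\dagger(2me_v-d)\ge0$, and then covers strictness using the leftover term $8m^2\tau+4m\tau-12\tau$, which forces a separate hand computation for $m=1$. You instead sum the triangle inequality $r_{ij}\le r_{iv}+r_{jv}$ against the weights $d_id_j$, which yields the strict bound $R_v\ge\kappa(G)+\tfrac{1}{2m}\sum_i d_i^2 r_{iv}>\kappa(G)$ in one stroke and handles $m=1$ uniformly. Both key facts (that $\Omega$ is a metric, and that $L^\dagger$ is PSD) are classical, so neither argument is more elementary in substance, but your quantitative surplus $\tfrac{1}{2m}\sum_i d_i^2 r_{iv}$ is a slightly stronger statement than the paper's and would be the version to keep if one wanted an explicit lower bound on $\kappa(\hat G_v)-\kappa(\tilde G_v)$.
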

The following two sections are dedicated to the proof of Theorem \ref{theorem_main_connected_graphs_twin_braess_27_agosto_1340}. First, we provide a criterion to decide whether a given graph is $v$-twin-Braess for some fixed vertex $v$ (Proposition \ref{prop_criterion_v_twin_braess_2aug_1621}). Then, in Section \ref{sec_resistance_distance}, we use some properties of the resistance distance of graphs to conclude the proof.

We need the following lemma, which is straightforward to prove.
\begin{lemma}
\label{lemma_frazioni_1658_26_august}
Let $a_1,a_2,a_3,a_4$ be positive real numbers. Then
\begin{equation*}
\frac{a_1+a_2}{a_3+a_4}>\frac{a_1}{a_3}\hspace{.5cm}\Leftrightarrow\hspace{.5cm}
\frac{a_2}{a_4}>\frac{a_1}{a_3}.
\end{equation*}
\end{lemma}

Given a connected graph $G$ as above whose vertices are labeled $1,2,\dots,n$, we let $m$ be the number of its edges, $\tau$ be the number of its spanning trees, $d=(d_1,d_2,\dots,d_n)\in \mathbb{R}^n$ be the vector containing the degrees of its vertices and $S=[s_{ij}]\in M_n(\mathbb{R})$ be its $2$-forest matrix. Given a vertex $v\in V(G)$ we define 
\begin{equation}
\label{lambda_v_G_2aug_1223}
\lambda_v(G)=-3d^TS d+12md^TS e_v + 8m^2\tau +4m\tau -12\tau
\end{equation}
(where $e_v\in\mathbb{R}^n$ is the $v$-th unit vector). 

The next result shows that the sign of $\lambda_v(G)$ determines whether $G$ is $v$-twin-Braess or not.
\begin{proposition}
\label{prop_criterion_v_twin_braess_2aug_1621}
$G$ is $v$-twin-Braess if and only if $\lambda_v(G)>0$.
\end{proposition}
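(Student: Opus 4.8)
The plan is to compute $\kappa(\tilde G_v)$ and $\kappa(\hat G_v)$ explicitly via the combinatorial formula \eqref{kemeny_kirkland_formula_2004_3sept2019}, and then reduce the inequality $\kappa(\hat G_v)>\kappa(\tilde G_v)$ to the positivity of $\lambda_v(G)$ using Lemma \ref{lemma_frazioni_1658_26_august}. First I would set up the bookkeeping: label the two new pendant vertices $a$ and $b$, so that $\tilde G_v$ has $n+2$ vertices and $m+2$ edges, and $\hat G_v$ has $n+2$ vertices and $m+3$ edges. I would express each of the four ingredients of the Kemeny formula — number of edges, number of spanning trees, degree vector, and $2$-forest matrix — for $\tilde G_v$ and for $\hat G_v$ in terms of the corresponding quantities $m,\tau,d,S$ of $G$. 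The spanning-tree counts are easy: every spanning tree of $\tilde G_v$ must use both pendant edges $va$ and $vb$, so $\tau(\tilde G_v)=\tau$; for $\hat G_v$, a spanning tree uses exactly two of the three edges in the triangle $\{va,vb,ab\}$, so $\tau(\hat G_v)=3\tau$. The degree vectors are $d$ with two extra $1$'s appended (for $\tilde G_v$, with $d_v$ bumped by $2$) and with two extra $2$'s appended (for $\hat G_v$, again with $d_v$ bumped by $2$).

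The heart of the computation is the $2$-forest matrix. Here I would carefully enumerate, by cases on where the cut separating the two trees falls relative to the pendant vertices, how $s_{ij}$ transforms. For pairs $i,j\in V(G)$, an $\{i,j\}$-$2$-forest of $\tilde G_v$ either keeps both pendants on $v$'s side or, if $v$ lies in $T_1$ say, may or may not include the pendant edges; unravelling this gives $s_{ij}(\tilde G_v)$ as a combination of $s_{ij}$ and terms like $s_{iv}$ or $s_{jv}$ and $\tau$. For pairs involving $a$ or $b$, one gets contributions governed by $\tau$ and by the $s_{\cdot v}$ column. Then one assembles $d(\tilde G_v)^T S(\tilde G_v)\, d(\tilde G_v)$, and similarly for $\hat G_v$, collecting everything into expressions in the four scalars $d^TSd$, $d^TSe_v = \sum_j s_{jv}d_j$, $m$, and $\tau$ (using $e^TSe = \ldots$-type identities and the fact that $Se$ or $d^T S$ simplifies via known relations, e.g. $s_{ii}=0$ and symmetry). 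After this, $\kappa(\hat G_v)$ and $\kappa(\tilde G_v)$ become explicit rational functions of these scalars.

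The final step is algebraic. Writing $\kappa(\tilde G_v) = \frac{p_1}{q_1}$ and $\kappa(\hat G_v) = \frac{p_1+p_2}{q_1+q_2}$ for suitable positive $p_1,p_2,q_1,q_2$ built from the numerators and denominators $4m\tau$ and $4(m+3)(3\tau)$ etc. — or more directly, clearing denominators — I would invoke Lemma \ref{lemma_frazioni_1658_26_august} (or just a direct cross-multiplication) to see that $\kappa(\hat G_v)>\kappa(\tilde G_v)$ is equivalent to a single polynomial inequality in $d^TSd$, $d^TSe_v$, $m$, $\tau$, which upon simplification should be exactly $\lambda_v(G)>0$ with $\lambda_v(G)$ as in \eqref{lambda_v_G_2aug_1223}. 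I would double-check the constant $12$, $8m^2$, $4m$, $-12$ coefficients against the small example in Figure \ref{fig_example_v_twin_braess_1810_5sept} as a sanity check.

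The main obstacle I anticipate is the careful case analysis for the $2$-forest matrix of $\hat G_v$: because $\hat G_v$ contains the triangle on $\{v,a,b\}$, a $2$-forest can split this triangle in several ways (the cut can put $a$ and $b$ on the same side or on opposite sides of each other and of $v$), and each configuration contributes differently depending on which of the edges $va, vb, ab$ survive. Getting all these cases right, and correctly tracking the diagonal entries $s_{aa}, s_{bb}, s_{vv}$ (which vanish) and the new entries $s_{av}, s_{bv}, s_{ab}, s_{ai}, s_{bi}$, is where bookkeeping errors are most likely. Everything after that — assembling the quadratic form and matching coefficients — is mechanical, though lengthy.
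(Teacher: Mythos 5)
Your proposal follows exactly the paper's route: apply the combinatorial formula \eqref{kemeny_kirkland_formula_2004_3sept2019} to $\tilde G_v$ and $\hat G_v$, express $\tilde m=m+2$, $\hat m=m+3$, $\tilde\tau=\tau$, $\hat\tau=3\tau$, the modified degree vectors, and the $2$-forest matrices in terms of $m,\tau,d,S$, and then reduce $\kappa(\hat G_v)>\kappa(\tilde G_v)$ to $\lambda_v(G)>0$ via Lemma \ref{lemma_frazioni_1658_26_august}; all the intermediate facts you state are correct, and the case analysis you flag as the delicate point (the triangle on $\{v,a,b\}$ in $\hat G_v$, giving $\hat s_{ij}=3s_{ij}$ for $i,j\in V$ and $\hat s_{aj}=3s_{vj}+2\tau$) is precisely the part the paper works out in detail. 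This is the same proof in outline form.
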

\begin{proof}
Consider the two graphs $\tilde{G}_v$ and $\hat{G}_v$ as defined above, and denote by $a$ and $b$ the two extra vertices attached to $v$ in $\tilde{G}_v$ and $\hat{G}_v$. Let $\tilde d\in\mathbb{R}^{n+2}$, $\tilde{S}=[\tilde{s}_{ij}]\in M_{n+2}(\mathbb{R})$, $\tilde m$ and $\tilde{\tau}$ be the degree vector of $\tilde{G}_v$, its $2$-forest matrix, the number of its edges and the number of its spanning trees respectively. Analogously, let $\hat d\in\mathbb{R}^{n+2}$, $\hat{S}=[\hat{s}_{ij}]\in M_{n+2}(\mathbb{R})$, $\hat m$ and $\hat{\tau}$ be the degree vector of $\hat{G}_v$, its $2$-forest matrix, the number of its edges and the number of its spanning trees respectively. We write $V$ instead of $V(G)$ for brevity.

Clearly, $\tilde{m}=m+2$ and $\hat{m}=m+3$. The degree vectors $\tilde d$ and $\hat d$ have the following description:
\begin{align*}
\tilde d_i=
\left\{
\begin{array}{cll}
d_i & \mbox{if} & i\in V\setminus \{v\}\\
d_v+2 & \mbox{if} & i=v\\
1 & \mbox{if} & i\in\{a,b\}
\end{array}
\right.
\hspace{.8cm}
\mbox{and}
\hspace{.8cm}
\hat d_i=
\left\{
\begin{array}{cll}
d_i & \mbox{if} & i\in V\setminus \{v\}\\
d_v+2 & \mbox{if} & i=v\\
2 & \mbox{if} & i\in\{a,b\}.
\end{array}
\right.
\end{align*}
The spanning trees of $\tilde{G}_v$ are precisely the spanning trees of $G$ with the addition of the edges $va$ and $vb$; hence, $\tilde{\tau}=\tau$. Moreover, every spanning tree of $\hat G_v$ is obtained from a spanning tree of $G$ by adding exactly two of the three edges $va$, $vb$ and $ab$, so that $\hat{\tau}=3\tau$. Similar arguments show that the elements of the $2$-forest matrices $\tilde S$ and $\hat S$ are given by
\begin{equation*}
\tilde s_{ij}=
\left\{
\begin{array}{cll}
s_{ij} & \mbox{if} & i,j\in V\\
0 & \mbox{if} & i=j\in\{a,b\}\\
2\tau &\mbox{if} & i,j\in\{a,b\}, i\neq j\\
\tau+ s_{vj} & \mbox{if} & i\in\{a,b\}, j\in V\\
\tau+ s_{iv} & \mbox{if} & i\in V, j\in\{a,b\}
\end{array}
\right. 
\end{equation*}
and
\begin{equation}
\label{s_hat_elements_1135_23_aug}
\hat s_{ij}=
\left\{
\begin{array}{cll}
3 s_{ij} & \mbox{if} & i,j\in V\\
0 & \mbox{if} & i=j\in\{a,b\}\\
2\tau &\mbox{if} & i,j\in\{a,b\}, i\neq j\\
3s_{vj}+2\tau & \mbox{if} & i\in\{a,b\}, j\in V\\
3s_{iv}+2\tau & \mbox{if} & i\in V, j\in\{a,b\}.
\end{array}
\right. 
\end{equation}
The fourth line of \eqref{s_hat_elements_1135_23_aug} comes from the fact that, if $i=a$ and $j\in V$, then an $\{i,j\}$-$2$-forest $(T_1,T_2)$ in $\hat G_v$ can be of any of the following three types: 
\begin{enumerate}
\item
$V(T_1)=\{a\}$, $T_2$ is a spanning tree of $G$ with the addiction of the edge $bv$;
\item
$V(T_1)=\{a,b\}$, $T_2$ is a spanning tree of $G$;
\item
$a,b,v\in V(T_1)$;  then $(T_1,T_2)$ is obtained by taking a $\{v,j\}$-$2$-forest $(U_1,U_2)$ in $G$ and adding to $U_1$ exactly two of the three edges $ab,av,bv$.
\end{enumerate}
The first two types amount to $\tau$ different $\{i,j\}$-$2$-forests each, while the third type amounts to $3s_{vj}$ different $\{i,j\}$-$2$-forests. The case when $i=b$ and the fifth line of \eqref{s_hat_elements_1135_23_aug} are analogous. 

Exploiting the symmetry of the $2$-forest matrix and the fact that its diagonal is zero, we have that
\begin{align*}
\tilde d^T\tilde S\tilde d&=\sum_{i,j\in V\setminus\{v\}}\tilde{d}_i\tilde d_j\tilde s_{ij}+2\sum_{j\in V\setminus\{v\}}\tilde{d}_v\tilde d_j\tilde s_{vj}+
2\sum_{\substack{i\in\{a,b\}\\j\in V\setminus\{v\}}}\tilde{d}_i\tilde d_j\tilde s_{ij}\\
&\quad+
2\sum_{j\in\{a,b\}}\tilde{d}_v\tilde d_j\tilde s_{vj}
+\sum_{i,j\in\{a,b\}}\tilde{d}_i\tilde d_j\tilde s_{ij}\\
&=\sum_{i,j\in V\setminus\{v\}}d_id_js_{ij}
+2\sum_{j\in V\setminus\{v\}}(d_v+2)d_js_{vj}
+
2\sum_{\substack{i\in\{a,b\}\\j\in V\setminus\{v\}}}d_j(\tau+s_{vj})\\
&\quad+2\sum_{j\in\{a,b\}}(d_v+2)(\tau+s_{vv})+4\tau\\
&=
\sum_{i,j\in V}d_id_js_{ij}+4\sum_{j\in V\setminus \{v\}}d_js_{vj}+4\tau\sum_ {j\in V\setminus\{v\}}d_j+
4\sum_ {j\in V\setminus\{v\}}d_js_{vj}\\
&\quad + 4\tau(d_v+2)+4\tau\\
&=
d^TSd+8 \sum_{j\in V\setminus \{v\}}d_js_{vj}+4\tau(2m-d_v)+4\tau d_v+12\tau\\
&=d^TSd+8d^TS e_v+8m\tau+12\tau
\intertext{and}
\hat d^T \hat S\hat d&=\sum_{i,j\in V\setminus\{v\}}\hat{d}_i\hat d_j\hat s_{ij}+2\sum_{j\in V\setminus\{v\}}\hat{d}_v\hat d_j\hat s_{vj}+
2\sum_{\substack{i\in\{a,b\}\\j\in V\setminus\{v\}}}\hat{d}_i\hat d_j\hat s_{ij}\\
&\quad+
2\sum_{j\in\{a,b\}}\hat{d}_v\hat d_j\hat s_{vj}
+\sum_{i,j\in\{a,b\}}\hat{d}_i\hat d_j\hat s_{ij}\\
&=3\sum_{i,j\in V\setminus\{v\}}d_id_js_{ij}
+6\sum_{j\in V\setminus\{v\}}(d_v+2)d_js_{vj}+
4\sum_{\substack{i\in\{a,b\}\\j\in V\setminus\{v\}}}d_j(3s_{vj}+2\tau)\\
&\quad+
4\sum_{j\in\{a,b\}}(d_v+2)(3s_{vv}+2\tau)
+16\tau\\
&=3\sum_{i,j\in V}d_id_js_{ij}
+12\sum_{j\in V\setminus\{v\}}d_js_{vj}
+24\sum_{j\in V\setminus\{v\}}d_js_{vj}\\
&\quad+16\tau\sum_{j\in V\setminus\{v\}}d_j+16\tau(d_v+2)
+16\tau\\
&=
3d^TSd+36\sum_{j\in V\setminus\{v\}}d_js_{vj}+16\tau(2m-d_v)+16\tau d_v+48\tau\\
&=
3d^TSd+36d^TS e_v+32m\tau+48\tau\\
&=3\tilde d^T\tilde S\tilde d + 12 d^TS e_v +8 m\tau + 12\tau.
\end{align*}
Moreover,
\begin{align*}
4\tilde m \tilde \tau &= 4(m+2)\tau =4m\tau+8\tau \hspace{2cm} 
\intertext{and}
4\hat m\hat \tau &=4(m+3)3\tau=12m\tau+36\tau=3(4\tilde m\tilde \tau)+12\tau.
\end{align*}
From expression \eqref{kemeny_kirkland_formula_2004_3sept2019} we obtain that
\begin{align*}
\kappa(\hat G_v)&=\frac{\hat d^T\hat S\hat d}{4\hat m\hat \tau}\\
&=\frac{3\tilde d^T\tilde S\tilde d + 12 d^TS e_v +8 m\tau + 12\tau}{3(4\tilde m\tilde \tau)+12\tau}.
\end{align*}
Using Lemma \ref{lemma_frazioni_1658_26_august} we see that
\begin{align*}
&\hspace{.9cm}\kappa(\hat G_v)>\kappa(\tilde G_v)\\
& \Leftrightarrow\hspace{.3cm}  \frac{12 d^TS e_v +8 m\tau + 12\tau}{12\tau}>\frac{\tilde d^T\tilde S\tilde d}{4\tilde m\tilde \tau}\\
& \Leftrightarrow\hspace{.3cm}  12(m+2)d^TSe_v+8m(m+2)\tau+12(m+2)\tau > 3\tilde d^T\tilde S\tilde d\\
& \Leftrightarrow\hspace{.3cm} 12md^TSe_v+24d^TSe_v+8m^2\tau+28m\tau +24\tau\\
&\hspace{.9cm} > 3 d^TSd+24 d^TSe_v+24 m\tau+36\tau\\
& \Leftrightarrow\hspace{.3cm} -3d^TSd+12md^TSe_v+8m^2\tau+4m\tau-12\tau >0\\
& \Leftrightarrow\hspace{.3cm} \lambda_v(G)>0,
\end{align*}
thus concluding the proof.
\end{proof}
\begin{note*}
We refer to \cite[Theorem 3.1.2.]{HuKirkland} for a different result on the variation in the Kemeny's constant of a graph when inserting an edge between two twin vertices (i.e., two nonadjacent vertices having the same neighborhood). There, the change in the Kemeny's constant is expressed in terms of the stationary vector and the mean first passage times of the quotient matrix related to an equitable partition of the transition matrix of the given graph.
\end{note*} 
\section{Resistance distance and a proof of Theorem \ref{theorem_main_connected_graphs_twin_braess_27_agosto_1340}}
\label{sec_resistance_distance}
Proposition \ref{prop_criterion_v_twin_braess_2aug_1621} shows that the problem of deciding whether a graph $G$ is $v$-twin-Braess can be reduced to studying the sign of the quantity $\lambda_v(G)$ defined in \eqref{lambda_v_G_2aug_1223}. In order to do so, we exploit the connection of the $2$-forest matrix $S$ with a particular metric on the vertex set of $G$: the so-called \textit{resistance distance} (\cite{Klein,KleinRandic,Koolen-Markowsky-Park}). We give its definition after a short digression on the Moore-Penrose inverse of a matrix. 

Given a matrix $M\in \mathbb{R}^{m,n}$, a \textit{Moore-Penrose inverse} of $M$ is a matrix $M^{\dagger}\in \mathbb{R}^{n,m}$ satisfying 
\begin{enumerate}
\item
$MM^\dagger M = M$
\item
$M^\dagger M M^\dagger = M^\dagger$
\item
$(MM^\dagger)^T=MM^\dagger$
\item
$(M^\dagger M)^T=M^\dagger M$.
\end{enumerate}
One can prove that such a matrix $M^\dagger$ always exists and is unique (\cite{Penrose}).
\begin{proposition}
\label{pos_semidef_moore_penrose_27_agosto_1216}
Let $M\in M_n(\mathbb{R})$ be a symmetric positive semidefinite matrix. Then $M^\dagger$ is symmetric positive semidefinite.
\end{proposition}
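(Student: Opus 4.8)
The plan is to deduce both properties from the four Penrose conditions together with the uniqueness of the Moore--Penrose inverse, exploiting throughout that $M^T = M$.

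First I would prove that $M^\dagger$ is symmetric. The idea is to show that $(M^\dagger)^T$ also satisfies the four defining conditions with respect to $M$, so that uniqueness forces $(M^\dagger)^T = M^\dagger$. This is a short transposition exercise: for instance $M(M^\dagger)^T M = \bigl(M^T M^\dagger M^T\bigr)^T = \bigl(M M^\dagger M\bigr)^T = M^T = M$ by condition 1 and $M^T = M$, and similarly condition 2 for $(M^\dagger)^T$ follows from condition 2 for $M^\dagger$; the two symmetry requirements for $(M^\dagger)^T$ translate, again using $M^T = M$, into the statements that $M^\dagger M$ and $M M^\dagger$ are symmetric, which are exactly conditions 4 and 3 for $M^\dagger$. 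Hence $(M^\dagger)^T$ is a Moore--Penrose inverse of $M$, and by uniqueness $(M^\dagger)^T = M^\dagger$.

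Once symmetry is established, positive semidefiniteness is immediate from condition 2: for any $x \in \mathbb{R}^n$ we get $x^T M^\dagger x = x^T (M^\dagger M M^\dagger) x = (M^\dagger x)^T M (M^\dagger x) \ge 0$, where the middle equality uses $M^\dagger = (M^\dagger)^T$ and the inequality uses that $M$ is positive semidefinite.

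An alternative route, if one prefers to bypass the uniqueness argument, is to start from a spectral decomposition $M = Q\,\mathrm{diag}(\mu_1,\dots,\mu_n)\,Q^T$ with $Q$ orthogonal and each $\mu_i \ge 0$, set $\mu_i^{+} := \mu_i^{-1}$ if $\mu_i \neq 0$ and $\mu_i^{+} := 0$ otherwise, and check directly that $Q\,\mathrm{diag}(\mu_1^{+},\dots,\mu_n^{+})\,Q^T$ satisfies the four Penrose equations; that matrix is then $M^\dagger$, and it is visibly symmetric with nonnegative eigenvalues. In either approach the only mildly tedious point is the bookkeeping in verifying the Penrose equations, which is entirely routine; there is no genuine obstacle.
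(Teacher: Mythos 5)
Your main argument is correct and is essentially the proof given in the paper: transpose the four Penrose equations (using $M^T=M$) to see that $(M^\dagger)^T$ is also a Moore--Penrose inverse of $M$, invoke uniqueness to get symmetry, and then write $x^TM^\dagger x=(M^\dagger x)^TM(M^\dagger x)\geq 0$ via the second Penrose condition. The alternative spectral-decomposition route you sketch is also valid but is not needed.
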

\begin{proof}
Transposing the equations characterizing the Moore-Penrose inverse we obtain
\begin{enumerate}
\item
$M(M^\dagger)^TM=M$
\item
$(M^\dagger)^TM(M^\dagger)^T=(M^\dagger)^T$
\item
$((M^\dagger)^TM)^T=(M^\dagger)^TM$
\item
$(M(M^\dagger)^T)^T=M(M^\dagger)^T$.
\end{enumerate}
This shows that $(M^\dagger)^T$ is a Moore-Penrose inverse of $M$, and by uniqueness we conclude that $(M^\dagger)^T=M^\dagger$. Moreover, given $x\in \mathbb{R}^n$,
\begin{align*}
x^TM^\dagger x&=x^TM^\dagger M M^\dagger x = (M^\dagger x)^T M (M^\dagger x)\geq 0,
\end{align*}
thus showing that $M^\dagger$ is positive semidefinite.
\end{proof}
Let now $G$ be a connected graph with $n$ vertices and consider the following matrices in $M_n(\mathbb{R})$: the Laplacian matrix $L$ of $G$; the Moore-Penrose inverse $L^\dagger$ of $L$; the diagonal matrix $\nabla$ such that $(\nabla)_{ii}=(L^\dagger)_{ii}$, $i=1,2,\dots,n$; the all ones matrix $J$. The \textit{resistance distance matrix} $\Omega=[\omega_{ij}]$ of $G$ is the $n\times n$ matrix defined by
\begin{equation}
\label{eqn_resistance_distance_as_pseudo_inverse}
\Omega=\nabla J + J\nabla -2 L^\dagger.
\end{equation}
The name comes from the fact that, if we consider the graph $G$ to be an electric circuit, every edge being a resistor with electrical resistance equal to $1$, then the resistance distance $\omega_{ij}$ between vertex $i$ and vertex $j$  in $G$ is the effective resistance in the circuit when we connect a battery across $i$ and $j$ (Figure \ref{fig_example_electric_graph_1942_5sept}). Formula \eqref{eqn_resistance_distance_as_pseudo_inverse} can then be derived from Kirchhoff's laws for electric circuits (as done in \cite[Corollary A]{KleinRandic}).
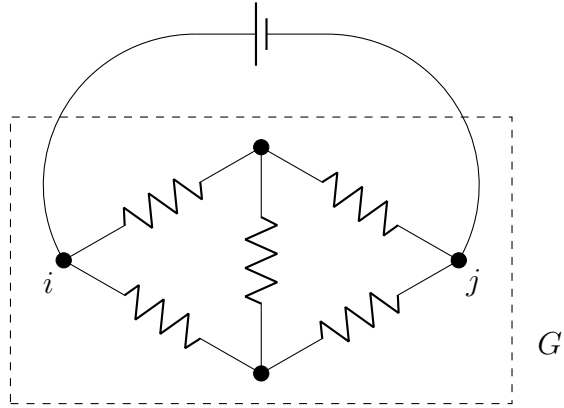
\begin{figure}
\centering
\begin{circuitikz}[baseline = {(0,-.5)}, scale = 1, transform shape]
%
%
\draw[fill]  (0,0) circle (0.1cm);
\draw[fill]  (2.5981,1.5) circle (0.1cm);
\draw[fill]  (2.5981,-1.5) circle (0.1cm);
\draw[fill]  (5.1962,0) circle (0.1cm);
\draw  (-.2,-.3) node{$i$};
\draw  (5.1962+.2,-.3) node{$j$};
\draw  (5.1962+.7+.5,-1.5-.4+.8) node{$G$};
\draw (0,0)
      to[R] (2.5981,1.5) 
      to[R] (5.1962,0) 
      to[R] (2.5981,-1.5)
      to[R] (0,0);
\draw (2.5981,1.5) to[R] (2.5981,-1.5);

\draw[] (0,0) arc (210:90:2);
\draw (1.73205,3) to[battery1] (5.1962-1.73205,3);
\draw[] (5.1962,0) arc (-30:90:2);
\draw[dashed]  +(-.7,-1.5-.4) rectangle +(5.1962+.7,1.5+.4);

%
%
%
\end{circuitikz}
\caption{A graph $G$ on $4$ vertices represented as an electric circuit, with a battery connected across vertices $i$ and $j$. Using either of the two expressions \eqref{eqn_resistance_distance_as_pseudo_inverse} and \eqref{eqn_S_tau_omega} one finds that $\omega_{ij}=1$.}
\label{fig_example_electric_graph_1942_5sept}
\end{figure}
As proved in \cite[Theorem 4. and (5)]{Bapat} (see also \cite{Barrett-Evans-Francis-Kempton-Sinkovic}), there is a close connection between the resistance distance matrix and the $2$-forest matrix of a graph. If $\tau$ is the number of spanning trees of $G$ and $S$ is its $2$-forest matrix, then
\begin{equation}
\label{eqn_S_tau_omega}
S=\tau\Omega.
\end{equation}
We now have all the ingredients to continue the study of $\lambda_v(G)$.

\begin{proposition}
\label{prop_lambda_regular_positive_2aug_1621}
Let $G$ be a nontrivial connected graph. Then $\lambda_v(G)>0$ $\forall v\in V(G)$.
\end{proposition}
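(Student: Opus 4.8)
The plan is to eliminate the $2$-forest matrix in favour of the resistance distance matrix $\Omega$ by means of identity \eqref{eqn_S_tau_omega}, and then to exploit that $\Omega$ is a metric on $V(G)$. Substituting $S=\tau\Omega$ into the definition \eqref{lambda_v_G_2aug_1223} of $\lambda_v(G)$ and using $\tau\ge 1$, the claim $\lambda_v(G)>0$ becomes equivalent to
\[
-3\,d^T\Omega d + 12m\,d^T\Omega e_v + 8m^2 + 4m - 12 > 0 .
\]
Since \eqref{kemeny_kirkland_formula_2004_3sept2019} together with \eqref{eqn_S_tau_omega} gives $\kappa(G)=\dfrac{d^T\Omega d}{4m}$, one has $-3\,d^T\Omega d+12m\,d^T\Omega e_v = 12m\bigl(d^T\Omega e_v-\kappa(G)\bigr)$, so it suffices to show
\[
12m\bigl(d^T\Omega e_v-\kappa(G)\bigr) + 8m^2 + 4m - 12 > 0 .
\]

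The key point is that $d^T\Omega e_v>\kappa(G)$ for every vertex $v$. To prove this I would clear denominators and write the difference as a double sum, $4m\,d^T\Omega e_v - d^T\Omega d = 4m\sum_{i} d_i\omega_{iv} - \sum_{i,j} d_i d_j\omega_{ij}$, and then apply the triangle inequality $\omega_{ij}\le\omega_{iv}+\omega_{vj}$ for the resistance distance. Together with $\sum_i d_i = 2m$ this yields $\sum_{i,j} d_i d_j\omega_{ij}\le\sum_{i,j} d_i d_j(\omega_{iv}+\omega_{vj}) = 4m\sum_i d_i\omega_{iv}$, whence $4m\,d^T\Omega e_v - d^T\Omega d\ge 0$. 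Strictness comes for free: for any $u\ne v$, which exists because $G$ is nontrivial, the term indexed by the pair $(u,u)$ contributes $d_u^2\,\omega_{uu}=0$ on the left but $d_u^2(\omega_{uv}+\omega_{vu})=2d_u^2\,\omega_{uv}>0$ on the right, since $\omega_{uv}>0$ in a connected graph. Hence $d^T\Omega e_v-\kappa(G)>0$.

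It then only remains to observe that $8m^2+4m-12 = 4(2m+3)(m-1)\ge 0$, because a nontrivial connected graph has $m\ge n-1\ge 1$. Consequently the left-hand side of the displayed inequality is the sum of a strictly positive term and a nonnegative term, so it is positive; this gives $\lambda_v(G)>0$.

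The only genuine obstacle is recognising that $\lambda_v(G)$ is governed by the quantity $d^T\Omega e_v-\kappa(G)$ and that its positivity is nothing but the triangle inequality for $\Omega$ applied with the degrees as weights; once this is seen, the rest is routine algebra together with the two small numerical checks above. One could instead establish $d^T\Omega e_v>\kappa(G)$ through the spectral identity $Z_{vv}>w_v$ for the diagonal entries of the fundamental matrix of the walk, but the metric argument is shorter and keeps everything within the resistance-distance framework already set up.
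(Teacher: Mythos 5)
Your proof is correct, and it reaches the crucial inequality $4m\,d^T\Omega e_v - d^T\Omega d \geq 0$ by a genuinely different route from the paper. The paper also passes from $S$ to $\Omega$ via \eqref{eqn_S_tau_omega}, but it then substitutes $\Omega=\nabla J+J\nabla-2L^\dagger$ and recognizes the resulting expression $8m^2\tau L^\dagger_{vv}-8m\tau\, d^TL^\dagger e_v+2\tau\, d^TL^\dagger d$ as the quadratic form $(x-y)^TL^\dagger(x-y)$ with $x=2m\sqrt{2\tau}\,e_v$, $y=\sqrt{2\tau}\,d$, which is nonnegative because $L^\dagger$ inherits positive semidefiniteness from $L$ (Proposition \ref{pos_semidef_moore_penrose_27_agosto_1216}). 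You instead invoke the triangle inequality $\omega_{ij}\le\omega_{iv}+\omega_{vj}$ with the degrees as weights, summing over all pairs and using $\sum_i d_i=2m$. Both are legitimate; the trade-off is that your argument outsources the analytic content to the classical (and true, but not proved in the paper) fact that effective resistance is a metric, while the paper proves everything it needs from the Moore--Penrose machinery it has already set up. Your version does buy two small improvements: the diagonal terms $\omega_{uu}=0<2\omega_{uv}$ give \emph{strict} inequality in \eqref{claim_part_lambda_v_27_agosto_1329} whenever $G$ is nontrivial, and the factorization $8m^2+4m-12=4(2m+3)(m-1)\ge 0$ for $m\ge 1$ then lets you dispense with the paper's separate computation for the case $m=1$ (which the paper needs precisely because its version of the inequality is only nonstrict and the constant term vanishes at $m=1$). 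If you wanted to keep the argument fully self-contained, note that the triangle inequality for $\Omega$ is itself usually proved via the positive semidefiniteness of $L^\dagger$, so the two proofs are cousins at bottom.
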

\begin{proof}
If $m=1$ we have that $\tau=1$, $d=\begin{pmatrix}
1 \\ 
1
\end{pmatrix} $ and $S=\begin{pmatrix}
0 & 1 \\ 
1 & 0
\end{pmatrix} $. We obtain from \eqref{lambda_v_G_2aug_1223} that
\begin{align*}
\lambda_v(G)&=-3d^TS d+12md^TS e_v + 8m^2\tau +4m\tau -12\tau\\
&=-6+12+8+4-12=6>0.
\end{align*}
If $m\geq 2$ we deduce that $8m^2\tau +4m\tau -12\tau>0$, and hence
\begin{align*}
\lambda_v(G)&=-3d^TS d+12md^TS e_v + 8m^2\tau +4m\tau -12\tau\\
 &> -3d^TS d+12md^TS e_v.
\end{align*}
As a consequence, it is enough to prove that 
\begin{equation}
\label{claim_part_lambda_v_27_agosto_1329}
4md^TS e_v - d^TS d \geq 0.
\end{equation}
Letting $\Omega$, $L$, $\nabla$ and $J$ be as above and using \eqref{eqn_S_tau_omega} and \eqref{eqn_resistance_distance_as_pseudo_inverse} we obtain that
\begin{align*}
d^TS d&=\tau d^T\Omega d\\
&=\tau(d^T\nabla Jd+d^TJ\nabla d-2d^T L^\dagger d)\\
&=\tau(2md^T\nabla e+2me^T\nabla d-2d^TL^\dagger d)\\
&=\tau(4md^T\nabla e-2d^T L^\dagger d)
\end{align*}
and
\begin{align*}
4md^TS e_v&=4m\tau d^T\Omega e_v\\
&=4m\tau(d^T\nabla J e_v+d^TJ\nabla e_v-2d^T L^\dagger e_v)\\
&=4m\tau (d^T\nabla e+2me^T\nabla e_v-2d^T L^\dagger e_v)\\
&=4m\tau(d^T\nabla e+2m{L^\dagger}_{vv}-2d^T L^\dagger e_v).
\end{align*}
Hence,
\begin{equation*}
4md^TS e_v-d^TS d = 8m^2\tau{L^\dagger}_{vv}-8m\tau d^T L^\dagger e_v+2\tau d^T L^\dagger d. 
\end{equation*}
Since the Laplacian matrix $L$ is symmetric and positive semidefinite, Proposition \ref{pos_semidef_moore_penrose_27_agosto_1216} shows that so is $L^{\dagger}$. Introducing the two vectors $x=2m\sqrt{2\tau}e_v$ and $y=\sqrt{2\tau}d$, we deduce that
\begin{align*}
0&\leq (x-y)^TL^\dagger(x-y)\\&=x^TL^\dagger x-2y^TL^\dagger x + y^TL^\dagger y\\
&=8m^2\tau {L^\dagger}_{vv}-8m\tau d^T L^\dagger e_v + 2\tau d^TL^\dagger d\\
&=4md^TS e_v-d^TS d.
\end{align*}
This shows that \eqref{claim_part_lambda_v_27_agosto_1329} holds, thus concluding the proof of the proposition.
\end{proof}
\begin{proof}[\textbf{Proof of Theorem \ref{theorem_main_connected_graphs_twin_braess_27_agosto_1340}}]
The result follows directly from Proposition \ref{prop_criterion_v_twin_braess_2aug_1621} and Proposition \ref{prop_lambda_regular_positive_2aug_1621}.
\end{proof}
\section{How many graphs are paradoxical?}
\label{sec_how_many_graphs_are_paradoxical}
Given a connected graph $G$ and a non-edge $e$ of $G$, we say that $G$ is \textit{$e$-paradoxical} if $\kappa(G\cup e)>\kappa(G)$; we say that $G$ is \textit{paradoxical} if it is $e$-paradoxical for some non-edge $e$. Notice that a graph $G$ is $v$-twin-Braess for some fixed $v\in V(G)$ if and only if $\tilde G_v$ is $ab$-paradoxical, where $\tilde{G}_v$ is the graph obtained from $G$ by attaching two pendant vertices at $v$, and $a,b$ are these additional vertices. Recalling that two vertices in a graph are pendant twin vertices if they are both pendant and adjacent to a common vertex, we can then rewrite Theorem \ref{theorem_main_connected_graphs_twin_braess_27_agosto_1340} as follows:
\begin{corollary}
\label{cor_main_result_paradoxical}
Let $G$ be a connected graph with at least $4$ vertices, and let $a,b$ be twin pendant vertices. Then $G$ is $ab$-paradoxical.
\end{corollary}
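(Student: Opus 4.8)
The plan is to derive this corollary directly from Theorem \ref{theorem_main_connected_graphs_twin_braess_27_agosto_1340}, by recognizing that the configuration described is precisely the one encoded by the operations $\tilde{\cdot}_v$ and $\hat{\cdot}_v$. First I would let $v$ denote the common neighbor of the twin pendant vertices $a$ and $b$, and set $G'=G-\{a,b\}$, the graph obtained by deleting $a$ and $b$ from $G$. Since $a$ and $b$ each have degree $1$ in $G$, deleting them cannot disconnect the graph, so $G'$ is connected; and since $|V(G)|\geq 4$, we have $|V(G')|\geq 2$, so $G'$ is nontrivial. Note also that $v\in V(G')$.

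Next I would observe that $G$ is obtained from $G'$ by attaching two pendant vertices, namely $a$ and $b$, at $v$, so that $G=\tilde{G'}_v$; and that $G\cup ab$ — which is a genuine operation, since $a$ and $b$, being pendant and each adjacent only to $v$, are nonadjacent, hence $ab$ is a non-edge of $G$ — is exactly $\hat{G'}_v$, the graph obtained from $\tilde{G'}_v$ by joining $a$ and $b$ with an edge. Then I would invoke Theorem \ref{theorem_main_connected_graphs_twin_braess_27_agosto_1340}: since $G'$ is a nontrivial connected graph it is twin-Braess, hence in particular $v$-twin-Braess, which by definition means $\kappa(\hat{G'}_v)>\kappa(\tilde{G'}_v)$. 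Translating back through the two identifications above, this reads $\kappa(G\cup ab)>\kappa(G)$, i.e., $G$ is $ab$-paradoxical.

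There is essentially no obstacle in this argument: the corollary is a reformulation of the main theorem, and the only points requiring a moment of care are checking that $G'$ satisfies the hypotheses of Theorem \ref{theorem_main_connected_graphs_twin_braess_27_agosto_1340} (connectedness and nontriviality, both immediate from the pendant-vertex assumption together with $|V(G)|\geq 4$) and confirming that $ab$ is a non-edge, so that the assertion "$G$ is $ab$-paradoxical" is well-posed.
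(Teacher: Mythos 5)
Your proof is correct and follows exactly the route the paper intends: the corollary is obtained by applying Theorem \ref{theorem_main_connected_graphs_twin_braess_27_agosto_1340} to $G'=G-\{a,b\}$ with $v$ the common neighbor, after noting $G=\tilde{G'}_v$ and $G\cup ab=\hat{G'}_v$. The small checks you make (connectedness and nontriviality of $G'$, and that $ab$ is a non-edge) are the only points of substance, and you handle them correctly.
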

It is natural, at this point, to wonder how many graphs have the property of being paradoxical. Recalling the interpretation of the Kemeny's constant given in the Introduction, we see that this question has practical importance. Suppose, for example, that a graph $G$ represents the plan of a ward in a hospital, each edge in the graph being a corridor. Consider the unlucky event of a patient developing an undiagnosed, highly contagious disease. His walk -- which we assume to be random in the sense described in the Introduction -- along the corridors of the hospital can then be very risky for other patients. In this setting, the Kemeny's constant of $G$ provides a measure of the estimated spreading rate of the infection: a higher value of $\kappa(G)$ is desirable, for it corresponds to a safer hospital. This should be taken into account when drawing the plans for the building. If $G$ is non-paradoxical, then the only way to obtain a safer ward is to delete a corridor, which would however limit the freedom of movement for patients. If $G$ is paradoxical, however, it is possible to increase both safety and freedom of movement by adding a corridor in some strategic position.

In case $G$ is a tree, a result from \cite{KirklandZeng} makes the situation interesting for the architects of the hospital.
\begin{theorem}[\cite{KirklandZeng}]
\label{thm_almost_all_trees_paradoxical}
Almost all unlabeled trees are paradoxical.
\end{theorem}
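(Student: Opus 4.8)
The plan is to leverage Corollary~\ref{cor_main_result_paradoxical}: a tree is paradoxical as soon as it contains a pair of twin pendant vertices, i.e. two leaves attached to a common vertex (equivalently, a vertex of the tree that is adjacent to at least two leaves). So it suffices to show that the fraction of unlabeled trees on $n$ vertices that do \emph{not} contain such a configuration tends to $0$ as $n\to\infty$. Call a tree \emph{good} if some vertex has two or more leaf-neighbors, and \emph{bad} otherwise; I want to prove that almost all unlabeled trees are good.

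The main tool I would use is the asymptotic enumeration of unlabeled trees in the style of Otter, together with the singularity-analysis / generating-function machinery for rooted trees. Let $t_n$ denote the number of unlabeled trees on $n$ vertices; Otter's theorem gives $t_n \sim c\, n^{-5/2}\rho^{-n}$ for explicit constants $c>0$ and $\rho\approx 0.3383$, and the bad trees form a subfamily whose generating function I would analyze separately. Concretely, I would first pass to rooted trees (the class with a distinguished vertex), whose generating function $y(x)=\sum_{n\ge1} r_n x^n$ satisfies the Pólya functional equation $y(x)=x\exp\!\big(\sum_{k\ge1} y(x^k)/k\big)$, and encode the ``bad rooted tree'' condition — no vertex has two leaf-children, plus a mild condition at the root — as a modified functional equation. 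The key structural observation is that forbidding two leaf-children is a severe restriction: at each non-leaf vertex the multiset of children contains at most one leaf, which kills the combinatorial entropy coming from attaching arbitrarily many leaves. I would show this forces the dominant singularity of the bad-tree generating function to be strictly larger than $\rho$ (equivalently, the exponential growth rate of bad trees is strictly smaller than $\rho^{-1}$), so that (number of bad trees on $n$ vertices)$/t_n \to 0$ exponentially fast. Finally I would transfer the conclusion from rooted trees back to unrooted trees via Otter's dissimilarity formula (or the standard fact that $t_n$ and $r_n$ have the same exponential growth rate), which introduces only polynomial factors and does not affect the $o(1)$ estimate.

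The hard part will be making the ``strictly larger singularity'' step rigorous: one has to verify that the functional equation for bad rooted trees still has a square-root-type singularity governed by a system of the form $y=x\Phi(x,y)$ where $\Phi$ has the right positivity and aperiodicity properties for the smooth implicit-function / Drmota–Lalley–Woods framework to apply, and then show the resulting $\rho_{\mathrm{bad}}$ genuinely satisfies $\rho_{\mathrm{bad}}>\rho$ rather than merely $\ge\rho$. The cleanest way to get the strict inequality is a direct injective-embedding argument at the level of counting: exhibit an injection from bad trees on $n$ vertices into good trees on $n$ vertices that is ``very non-surjective'' — for instance, observing that a good tree can be built from a smaller bad (or arbitrary) tree by hanging a bounded gadget with two twin leaves at a chosen vertex, producing $\Omega(n)$ distinct good trees from each bad tree of size $n-O(1)$ — which forces $(\text{bad})/(\text{good}) \to 0$ without any analytic input at all. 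I would present the generating-function argument as the main line and mention the combinatorial injection as the quantitative backbone; either way, the conclusion is that the proportion of non-paradoxical (bad) unlabeled trees vanishes, so by Corollary~\ref{cor_main_result_paradoxical} almost all unlabeled trees are paradoxical.
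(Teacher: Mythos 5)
Your overall route is the same as the one the paper takes: reduce Theorem~\ref{thm_almost_all_trees_paradoxical} to the statement that almost all unlabeled trees contain a pair of twin pendant vertices, and then invoke the twin-pendant Braess property (Theorem~\ref{twin_braess_property_trees_31july_1651}, or its generalization Corollary~\ref{cor_main_result_paradoxical}) to conclude. The only divergence is in how the enumerative half is handled: the paper simply cites \cite{Schwenk}, which already establishes that almost every unlabeled tree contains any fixed rooted tree (in particular a cherry) as a limb, whereas you propose to re-derive this via P\'olya/Otter generating functions and singularity analysis. That main line is essentially Schwenk's own method and is sound in outline, and you correctly identify the delicate point (showing the singularity of the ``bad'' class is \emph{strictly} larger than $\rho$).

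The one concrete weakness is the combinatorial injection you offer as the ``quantitative backbone'' for the strict inequality. As stated it does not prove $b_n/t_n\to 0$. Hanging a bounded cherry gadget at each of the $n-O(1)$ vertices of a bad tree need not produce $\Omega(n)$ \emph{distinct unlabeled} good trees (automorphisms and repeated rooted subtrees can collapse many of these attachments), and, more importantly, you have not bounded the multiplicity with which a fixed good tree on $n$ vertices arises from such pairs $(T,v)$: a good tree may contain linearly many copies of the gadget, so the double count only yields an inequality of the form $b_{n-3}\cdot n \le t_n\cdot O(n)$, i.e.\ nothing stronger than $b_{n-3}\lesssim t_n$. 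To make the argument self-contained you would either have to carry out the generating-function analysis in full (which is exactly the content of \cite{Schwenk}) or design a map with a genuinely controlled multiplicity, e.g.\ by attaching the gadget at a canonically chosen vertex and tracking rooted rather than unrooted trees. Alternatively, and most economically, you can simply cite Schwenk's theorem as the paper and \cite{KirklandZeng} do, at which point your proof is complete.
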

The expression \open almost all" here is borrowed from random graph theory: if $a_n$ is the number of unlabeled trees with $n$ vertices and $b_n$ is the number of unlabeled paradoxical trees with $n$ vertices, then the statement of Theorem \ref{thm_almost_all_trees_paradoxical} means that $\lim_{n\rightarrow\infty}\frac{b_n}{a_n}=1$. 

Theorem \ref{thm_almost_all_trees_paradoxical} is a consequence of Theorem \ref{twin_braess_property_trees_31july_1651} and of a result from \cite{Schwenk} implying that almost all unlabeled trees (in the sense described above) have a pair of twin pendant vertices. Letting $G$ be a generic connected graph, the situation becomes more complicated. The main reason for this, as pointed out in \cite{Schwenk}, is that the asymptotic behavior of graphs appears to be less structured than the asymptotic behavior of trees. One way to make things easier is to focus on planar graphs. In the example given above, this corresponds to a single floor hospital -- a reasonable limitation. Moreover, being the asymptotic structure of the automorphism group of random graphs vastly different from that of trees, we consider the planar graphs to be labeled. We can then give the second main result of the paper.
\begin{theorem}
\label{thm_almost_all_planar_graphs_paradoxical_5_september}
Almost all labeled connected planar graphs are paradoxical.
\end{theorem}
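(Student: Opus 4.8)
The plan is to reduce Theorem \ref{thm_almost_all_planar_graphs_paradoxical_5_september} to Corollary \ref{cor_main_result_paradoxical} by showing that almost all labeled connected planar graphs contain a pair of twin pendant vertices. Concretely, let $p_n$ be the number of labeled connected planar graphs on $n$ vertices and $q_n$ the number of those that possess two twin pendant vertices. By Corollary \ref{cor_main_result_paradoxical}, every graph counted by $q_n$ (for $n\geq 4$) is paradoxical, so it suffices to prove that $q_n/p_n\to 1$ as $n\to\infty$.

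To establish this, I would invoke the theory of \emph{appearances} of a fixed rooted graph in a random planar graph, as developed by McDiarmid, Steger and Welsh. Recall that, given a connected planar graph $H$ with a distinguished vertex $r$, an \emph{appearance} of $(H,r)$ in a graph $G$ is a vertex subset $W\subseteq V(G)$ such that the induced subgraph $G[W]$ is isomorphic to $H$ via an isomorphism sending some vertex of $W$ to $r$, and such that no edge of $G$ joins $W\setminus\{\text{image of }r\}$ to $V(G)\setminus W$. Their result guarantees that for every such $(H,r)$ there is a constant $c=c(H,r)>0$ for which a uniformly random labeled connected planar graph on $n$ vertices has at least $cn$ appearances of $(H,r)$ with probability tending to $1$ (in fact exponentially close to $1$); in particular it has at least one such appearance asymptotically almost surely.

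The final step is to choose $H$ so that an appearance is precisely a pair of twin pendant vertices. Take $H=K_{1,2}$ (the path on three vertices) rooted at its central vertex $r$, and suppose $W=\{v,a,b\}$ is an appearance of $(H,r)$ in a connected planar graph $G$, with $v$ in the role of $r$. Then $G[W]\cong K_{1,2}$ forces $a\sim v$, $b\sim v$ and $a\not\sim b$, while the appearance condition forbids any edge from $a$ or $b$ to $V(G)\setminus W$; hence $a$ and $b$ are pendant vertices with common neighbor $v$, i.e.\ twin pendant vertices. Applying the appearance result with this $H$ yields $q_n/p_n\to 1$, and combining with Corollary \ref{cor_main_result_paradoxical} proves the theorem. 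The substantive input is the random-planar-graph fact about linearly many appearances, which itself relies on a pumping/addability argument for the class of planar graphs (using smoothness of the counting sequence, after Giménez and Noy) together with a concentration estimate; since a single appearance already suffices for us, a weaker statement — that a positive fraction of connected planar graphs on $n$ vertices contain such a cherry, boosted to ``almost all'' by a routine surgery/first-moment argument — would also do. I expect this transfer from ``positive fraction'' to ``almost all'' to be the only delicate point, and it is exactly what the cited machinery supplies.
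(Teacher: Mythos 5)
Your proposal is correct and follows essentially the same route as the paper: it invokes the McDiarmid--Steger--Welsh appearance theorem for the class of labeled connected planar graphs (whose growth constant is supplied by Gim\'enez--Noy) with $H$ the path on three vertices rooted at its center, deduces that almost every such graph contains a pair of twin pendant vertices, and concludes via Corollary \ref{cor_main_result_paradoxical}. The only differences are cosmetic: your paraphrase of the appearance condition and your closing remark about weaker ``positive fraction'' statements are not needed, since Theorem \ref{thm_pendant_appearances_1320_5sept} already gives the full strength required.
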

To prove Theorem \ref{thm_almost_all_planar_graphs_paradoxical_5_september} we use the work of McDiarmid, Steger and Welsh on asymptotic properties of random graphs (\cite{McDiarmid-Steger-Welsh-B,McDiarmid-Steger-Welsh-A}). 

We first set some terminology from \cite{McDiarmid-Steger-Welsh-B}. Let $\mathcal{A}$ be a class of labeled graphs closed under isomorphism, and, for $n\geq 1$, let $\mathcal{A}_n$ be the set of graphs in $\mathcal{A}$ on $n$ vertices (labeled $1,2,\dots,n$). We require that $\mathcal{A}_n\neq \emptyset$ for all sufficiently large $n$. We write $G_n\in_{u}\mathcal{A}_n$ to mean that $G_n$ is sampled from $\mathcal{A}_n$ uniformly at random (i.e., every graph in $\mathcal{A}_n$ has the same probability of being picked). We say that $\mathcal{A}$ has a \textit{growth constant} $\gamma$ if
\begin{equation*}
\lim_{n\rightarrow\infty}\left(\frac{|\mathcal{A}_n|}{n!}\right)^{\frac{1}{n}}=\gamma.
\end{equation*}
Consider two graphs $H$ and $G$ with $V(H)=\{1,2,\dots,h\}$ and $V(G)=\{1,2,\dots,n\}$, where $n>h$. Given a set $W\subset V(G)$ with $|W|=h$, we say that $H$ \textit{appears} at $W$ in $G$ if the increasing bijection from $\{1,2,\dots,h\}$ to $W$ gives an isomorphism from $H$ to the induced subgraph $G[W]$, there is a unique edge in $G$ joining $W$ to the rest of $G$ and this edge is incident with the least element in $W$. Denote by $a_H(G)$ the number of sets $W\subset V(G)$ such that $H$ appears at $W$ in $G$. Finally, if $H$ is a fixed graph with a distinguished vertex (root) $r$, we say that $H$ can be \textit{attached} to $\mathcal{A}$ if, for each $G\in\mathcal{A}$ and for each $v\in V(G)$, the graph obtained from $G \cup H$ by adding the edge $vr$ belongs to $\mathcal{A}$.

The following result shows that, under some conditions, a fixed graph $H$ appears  linearly many times in a random graph $G_n\in_u\mathcal{A}_n$ with high probability.
\begin{theorem}[\cite{McDiarmid-Steger-Welsh-B}]
\label{thm_pendant_appearances_1320_5sept}
Let $\mathcal{A}$ have a finite positive growth constant, ant let $G_n\in_u\mathcal{A}_n$. Let $H$ be a graph with a distinguished vertex $r$ that can be attached to $\mathcal{A}$. Then there exist constants $\alpha,\beta>0$ such that
\begin{equation*}
\mathbb{P}[a_H(G_n)\leq \alpha n]\leq e^{-\beta n}
\end{equation*}
for all sufficiently large $n$.
\end{theorem}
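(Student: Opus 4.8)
The plan is to establish the statement in two stages: first that the expected number of pendant appearances of $H$ in $G_n$ is linear in $n$, and then that $a_H(G_n)$ falls far below this mean only with exponentially small probability. Throughout I write $h=|V(H)|$ and, relabelling $H$ if necessary, I take its root $r$ to be the least-indexed vertex, so that attaching $H$ by the edge $vr$ produces exactly the incidence pattern demanded by the definition of an appearance (the unique external edge meeting the least element of the appearing set).

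The combinatorial heart of the argument is an exact correspondence that converts the attachability hypothesis into a count. I would construct an injection sending a triple $(F,v,W)$ — where $F\in\mathcal{A}_{n-h}$, $v\in V(F)$, and $W\subseteq\{1,\dots,n\}$ with $|W|=h$ — to the graph $G\in\mathcal{A}_n$ obtained by placing $F$ on $\{1,\dots,n\}\setminus W$ via the increasing bijection, placing the canonical copy of $H$ on $W$, and joining $v$ to the least element of $W$. Attachability and closure under isomorphism guarantee $G\in\mathcal{A}_n$, and $W$ is by construction a pendant appearance of $H$ in $G$ whose removal returns $F$. Conversely, any pair $(G,W)$ in which $W$ is an appearance and $G-W\in\mathcal{A}_{n-h}$ recovers the triple, so the map is a bijection. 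Writing $a_H^{*}(G)\le a_H(G)$ for the number of appearances $W$ with $G-W\in\mathcal{A}_{n-h}$ and summing over the correspondence yields
\begin{equation*}
\sum_{G\in\mathcal{A}_n}a_H^{*}(G)=\binom{n}{h}\,(n-h)\,|\mathcal{A}_{n-h}|.
\end{equation*}
Dividing by $|\mathcal{A}_n|$ and using $\binom{n}{h}(n-h)\tfrac{(n-h)!}{n!}=\tfrac{n-h}{h!}$, I get
\begin{equation*}
\mathbb{E}\!\left[a_H(G_n)\right]\ge\mathbb{E}\!\left[a_H^{*}(G_n)\right]=\frac{n-h}{h!}\cdot\frac{|\mathcal{A}_{n-h}|/(n-h)!}{|\mathcal{A}_n|/n!}.
\end{equation*}

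To turn this into a linear lower bound I would invoke the finite positive growth constant $\gamma$: since $(|\mathcal{A}_n|/n!)^{1/n}\to\gamma\in(0,\infty)$, one expects the displayed ratio to behave like $\gamma^{-h}$ and hence to be bounded below by a positive constant, giving $\mathbb{E}[a_H(G_n)]\ge c\,n$ for some $c>0$ and all large $n$. The delicate point is that the defining limit only controls $|\mathcal{A}_n|/n!$ to first exponential order; to pin the ratio $\tfrac{|\mathcal{A}_{n-h}|/(n-h)!}{|\mathcal{A}_n|/n!}$ down to $\Theta(\gamma^{-h})$ one needs the mild regularity (near-smoothness) of the counting sequence that accompanies the growth constant in this setting, and I would isolate this as a preliminary lemma.

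The genuine obstacle is the passage from a linear mean to an exponentially small lower tail, where Markov's inequality and bare first-moment counting are powerless. My plan is a bounded-differences (Doob martingale) argument: expose $G_n$ gradually and control the effect of each revelation on $a_H$. Two difficulties arise. First, the uniform measure on $\mathcal{A}_n$ is not a product measure, so any exposure martingale must respect membership in $\mathcal{A}$; I would manage this using the correspondence above, which reduces statements about $\mathcal{A}_n$ to statements about $\mathcal{A}_{n-h}$ together with an attachment, a structure on which the randomness is far more transparent. Second, a single vertex can be the attachment point of unboundedly many appearances, so the worst-case Lipschitz constant is not $O(1)$; this I would handle by a method-of-typical-bounded-differences refinement, treating the rare high-degree exposures separately. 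An alternative and possibly cleaner route is a direct compression argument: show that conditioning on fewer than $\alpha n$ appearances forces $G_n$ into a subclass of $\mathcal{A}_n$ whose cardinality is smaller than $|\mathcal{A}_n|$ by a factor $e^{-\beta n}$, by encoding such graphs economically against the attachment correspondence. In either formulation, it is the exponential concentration — not the linear mean — where the real work lies.
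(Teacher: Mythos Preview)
This theorem is not proved in the paper at all: it is quoted from McDiarmid--Steger--Welsh and invoked as a black box in the proof of Theorem~\ref{thm_almost_all_planar_graphs_paradoxical_5_september}. There is therefore no in-paper proof to compare your proposal against.

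On the merits of the proposal itself, your two-stage plan (linear mean, then concentration) has a genuine gap exactly at the point you flag. The growth-constant hypothesis controls $|\mathcal{A}_n|/n!$ only to first exponential order, and under the hypotheses actually stated (closure under isomorphism, attachability of $H$, finite positive growth constant) the ratio
\[
\frac{|\mathcal{A}_{n-h}|/(n-h)!}{|\mathcal{A}_n|/n!}
\]
need not be bounded below by any positive constant: attachability yields only $|\mathcal{A}_n|\ge (n-h)\,|\mathcal{A}_{n-h}|$, which is an inequality in the wrong direction. So the ``preliminary lemma'' you propose to isolate is simply not provable in this generality, your route to $\mathbb{E}[a_H(G_n)]\ge cn$ stalls, and with it the martingale step (which, as you note, has its own Lipschitz difficulties on a non-product measure).

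Your compression alternative is the right instinct and is, in outline, how McDiarmid--Steger--Welsh actually argue: they run a direct double count in which one attaches not a single copy but $k\approx\delta n$ disjoint copies of $H$ to a graph in $\mathcal{A}_{n-kh}$, producing elements of $\mathcal{A}_n$ each carrying at least $k$ guaranteed appearances; bounding how often each target graph is hit and comparing exponential growth rates then shows directly that graphs with at most $\alpha n$ appearances form an $e^{-\beta n}$ fraction of $\mathcal{A}_n$. The crucial difference from your first-moment route is that the comparison is between $|\mathcal{A}_{n-kh}|$ and $|\mathcal{A}_n|$ with $k$ linear in $n$, so the growth constant alone pins down the ratio to the needed exponential precision and no smoothness hypothesis is required. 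Your sketch names this mechanism but does not carry it out.
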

Gim\'{e}nez and Noy (\cite{Gimenez-Noy}) found the following asymptotic estimate for the number $c_n$ of labeled connected planar graphs on $n$ vertices: 
\begin{equation}
\label{eqn_number_connected_plaanr_graphs_1559_5sept}
c_n\sim c \cdot n^{-\frac{7}{2}}\gamma^n n!
\end{equation}
with $c\approx 4.1044 \cdot 10^{-6}$ and $\gamma\approx 27.2269$. This and Theorem \ref{thm_pendant_appearances_1320_5sept} are the last key elements we need to prove Theorem \ref{thm_almost_all_planar_graphs_paradoxical_5_september}.
\begin{proof}[\textbf{Proof of Theorem \ref{thm_almost_all_planar_graphs_paradoxical_5_september}}]
In the statement of Theorem \ref{thm_pendant_appearances_1320_5sept}, consider $\mathcal{A}$ being $\mathcal{CP}$, the class of labeled connected planar graphs, and $H$ being the path on $3$ vertices with the central vertex as root. From \eqref{eqn_number_connected_plaanr_graphs_1559_5sept} we have that $\mathcal{CP}$ has a finite positive growth constant. Moreover, it is clear that $H$ can be attached to $\mathcal{CP}$. Then Theorem \ref{thm_pendant_appearances_1320_5sept} implies that a labeled connected planar graph on $n$ vertices (sampled from $\mathcal{CP}_n$ uniformly at random) has a pair of twin pendant vertices -- actually linearly many -- with probability approaching $1$ as $n$ goes to infinity. Corollary \ref{cor_main_result_paradoxical} is then enough to conclude the proof of the theorem.
\end{proof}

\bigskip
\section*{Acknowledgments}
The author wishes to thank Geir Dahl (University of Oslo) and Wayne Barrett (Brigham Young University) for interesting discussions that helped to improve the final version of this work.

\end{document}